\author{Fedor Duzhin, Nanyang Technological University}
\title{Learning in teams: peer evaluation for fair assessment of individual contributions.}
\begin{document}

\maketitle

\begin{abstract}
The "free rider" problem has long plagued pedagogies based on collaborative learning. The most common solution to the free rider problem is peer evaluation. As well other existing methods of peer evaluation include self-evaluation --- and hence are prone to grade inflation or, as we show here, are inaccurate in that they do not fairly reward the most hard working student. Another common concern with existing methods of peer evaluation is that students often do not have the necessary skills to evaluate the work of their peers objectively.

In this paper, we introduce a new mechanism for peer evaluation that does not rely on self-evaluation, and yet remains accurate, i.e., if all students are completely truthful in their evaluations, then the output of our mechanism becomes an objective truth. At the same time, our mechanism integrates the instructor's judgment with respect to the credibility of students' evaluations. For example, the instructor gives scores to students for writing credible reviews and, in turn, subsequent students' evaluations are weighted according to these instructor scores.
%Our paper is mathematical and the accuracy of our mechanism approach is therefore a mathematical theorem. However, the reader does not need to be a mathematician to understand the ideas presented here, and/or apply our method in practice, as we have implemented it in a web-based application called "GradeMe".
\end{abstract}

\paragraph{Keywords:} peer evaluation, collaborative learning, cooperative learning
\paragraph{MSC 2010} Primary: 97D60, Secondary 91A80

\section{Introduction}

A vast body of literature exists on methods of assessment in tertiary education --- see, for example, \cite{palomba1999assessment}. In practice, however, written final exams prevail, even though most students will never take an exam in their life after graduation and therefore exam grades are hardly able to capture the true potential of a student to thrive in a complex work environment.

Even though most people never take formal exams after leaving school, working in teams and writing reports are typical job tasks of modern homo sapiens. Team work and report writing are taught at universities, but grading every individual student fairly based on a team's report is a challenge. For instance, if all the team members get the same grade, then a free-rider problem may occur (see \cite{leuthold1993free}, \cite{joyce1999free}, \cite{brooks2003free}).

The most obvious solution of the free-rider problem is peer evaluation (see \cite{conway1993peer}). The simplest and yet popular approach to peer evaluation is letting each student grade contribution of each of the team members in absolute terms, i.e., out of 10, a 100, as A, B, C or in a similar way (see, for example,
\cite{holter1994team}, \cite{malik1998exploration}, or \cite{morse2014international}). According to our experience, peer evaluation in absolute terms results to almost all students giving maximal scores to each other just not to jeopardize their friends' final grade. Thavikulwat and Chang criticize the whole idea of peer evaluation in \cite{thavikulwat2014pick} and propose to replace it with a different procedure based on students choosing their preferred group size. 

Note that while the free rider problem may plague a variety of collaborative learning activities, such as team-based learning (see \cite{stein2016student}), the main scenario that we have in mind is a team of students collaborating on a well-defined list of tasks. The group size then should equal, roughly, the number of individual tasks within the project and therefore varying the group size is not an option.

There exist sophisticated peer evaluation systems. The system described in \cite{brooks2003free} is based on each student allocating a certain number of points between their teammates. Kauffman et al. introduce in \cite{kaufman2000accounting} a mixed system where students give each other ratings from a list of nine terms such as "excellent", "very good" etc. but these ratings are then converted into a numeric value by dividing everyone rating by the team's average.

A rigorous mathematical theory of peer evaluation is outlined in \cite{carvalho2012sharing}. However, the theoretical work \cite{carvalho2012sharing} is very broad and aimed at mathematicians --- experts in game theory. In this paper, we are going to narrow the scope of the theory and to simplify it so it becomes accessible by educators and education researchers.

A system of peer evaluation is a procedure of calculating the "true'' (at least, as it is perceived by team members) contribution of each of the team members into the common task based on mutual evaluations reported by team members. A system of peer evaluation may or may not have certain desired qualities. One such quality is resistance to grade inflation. A system of peer evaluation that includes self-evaluation as part of the process is prone to grade inflation because students have incentive to overestimate their own contribution. The second desired quality is \emph{accuracy}. A system of peer evaluation is accurate if it outputs true contributions of each of the team members whenever they all report the truth. As we show below, there exist systems of peer evaluation that are accurate but prone to grade inflation and systems that do not rely on self-evaluation but are not accurate.  The third quality that may also be desired of a peer evaluation system is integration of the instructor's judgment into the system. The reason for integrating the instructor's judgment into the system is that students either may not have the expertise to evaluate their peers or may not have incentive to do it fairly. The instructor will then serve as moderator.

In this paper, we develop a system of peer evaluation that is accurate, does not rely on self-evaluation, and integrates the instructor's judgment. 

\section{Mathematical Theory}

We assume that $n$ students collaborate on a common goal of completing a set of well-defined tasks and there exist the objective truth --- the share of total work that each student has accomplished. If the true contribution/share of the $i$\,th student is $t_i$, then the objective truth is the vector
\begin{multline*}
t=(t_1,t_2\dots,t_n), 
\mbox{ where } t_1\ge 0,t_2\ge 0,\dots,t_n\ge 0\\
\mbox{ and }  t_1+t_2+\cdots+t_n=1
\end{multline*}
The vector $t$ is known to students but can't be observed by the course instructor directly and the system of peer evaluation should motivate students to reveal the truth to the instructor. What students report to the instructor is a matrix $A$ of evaluations of each student by each student. Let entries of this matrix be denoted $a_{ij}$ -- evaluation of student $i$ by student $j$. We assume that $a_{ij}\ge 0$ (even though systems of peer evaluation with negative scores exist, one can convert such a system to a system with non-negative scores simply by taking the exponential function of each score).

For each $i$, the vector $A_{i*}=(a_{i1},a_{i2},\dots,a_{in})$ is the vector of evaluations received by student $i$ and, for each $j$, the vector $A_{*j}=(a_{1j},a_{2j},\cdots,a_{nj} )$ is the vector of evaluations reported by student $j$.

\subsection{Mechanism}

A \emph{mechanism} (term adopted from \cite{carvalho2012sharing}) is an algorithm of calculating the vector of perceived students' contributions / shares of workload
\begin{multline*}
s=(s_1,s_2\dots,s_n), 
\mbox{ where } s_1\ge 0,s_2\ge 0,\dots,s_n\ge 0\\
\mbox{ and }  s_1+s_2+\cdots+s_n=1
\end{multline*}
from the matrix $A=(a_{ij} )_{1\le i\le n, 1\le j\le n}$. Note that the output of the mechanism, i.e., the vector $s$ of perceived contributions may or may not be equal to the vector $t$ of true contributions.

A mechanism may or may not rely on self-evaluation. If students are not required to report a self-evaluation, we let $a_{ii}=0$ for all $i$. A mechanism that relies on self-evaluation will be prone to \emph{grade inflation} --- a student will be tempted to give an unfairly high evaluation to himself. 

A mechanism is \emph{accurate} if outputs true contributions of each team member whenever all students are truthful. In other words, if, for each $j$, the vector 
$A_{*j}=(a_{1j},a_{2j},\cdots,a_{nj} )$
reported by student $j$ is proportional to the vector of true contributions $t=(t_1,t_2,\dots,t_n)$, then we must get $s=t$.

If the course instructor's purpose is to fairly evaluate each student's individual contribution to the team effort, the mechanism should be accurate and should not rely on self-evaluation. However, as we show below,  mechanisms that are widely used in practice, usually have one of these two qualities, but not both. Currently, mechanisms used in practice are mostly variations of one of the following two.

\subsubsection*{Pie-to-all mechanism}

The \emph{pie-to-all} mechanism works as follows.
Each student gets one pie and then distributes her pie among all team members, including herself, in proportion to their contribution to the team effort. The final perceived contribution of a team member is the average piece of pie that he received from all the team members. It means that we have
\begin{equation}\label{equation: pie-to-all}
a_{ij}\ge 0,\quad \sum_{i=1}^{n}a_{ij}=1,\quad
s_i=\frac{1}{n}\sum_{j=1}^{n}a_{ij}.
\end{equation}
It is easy to show that pie-to-all is an accurate mechanism. However, it relies on self-evaluation and hence is prone to grade-inflation. Clearly, the best strategy to student $j$ is to report $a_{jj}=1$ and $a_{ij}=0$ for $i\neq j$.

Still, the pie-to-all mechanism has been used in practice --- see, for example,  \cite{kaufman2000accounting}.

\subsubsection*{Pie-to-others mechanism}

The \emph{pie-to-others} mechanism works as follows.
Each student gets one pie and then distributes her pie among all her teammates, not including herself, in proportion to their contribution to the team effort.  The final perceived contribution of a team member is the average piece of pie that he received from all the team members. It means that we have
\begin{equation}\label{equation: pie-to-others}
a_{ij}\ge 0,\quad a_{ii}=0,\quad \sum_{i=1}^{n}a_{ij}=1,\quad
s_i=\frac{1}{n}\sum_{j=1}^{n}a_{ij}.
\end{equation}
The only difference with the pie-to-all mechanism \eqref{equation: pie-to-all} is the absence of self-evaluation, which is expressed by $a_{ii}=0$.

The pie-to-others is a very popular mechanism, probably the most popular one. Its clear advantage is that it does not rely on self-evaluation. However, a huge issue with the pie-to-others mechanism is that it is not accurate as the following example shows.

\begin{example}
Suppose that we have a team of three students and the vector of true contributions is
\[
t=\left(\frac{1}{2}, \frac{1}{4},\frac{1}{4}\right).
\] 
The matrix of peer-evaluations and the the vector of perceived contributions are
\[
A=
\begin{bmatrix}
0 & 2/3 & 2/3 \\
1/2 & 0 & 1/3 \\
1/2 & 1/3 & 0
\end{bmatrix},\quad 
s=
\begin{bmatrix}
4/9 \\ 5/18 \\ 5/18   
\end{bmatrix} \neq 
\begin{bmatrix}
1/2 \\ 1/4 \\ 1/4  
\end{bmatrix}
\]
\end{example}

In our experience, real students, at least those whose major is mathematics, understand very well that the pie-to-others mechanism is not accurate. Students will not be happy if a large portion of their grade comes from this mechanism.

However, the inaccuracy of  pie-to-others is not the only problem of this mechanism. Well,  pie-to-others still gives the highest score to the most hard-working student in a team, even though that score may not accurately reflect the true contribution. Let's demonstrate the other issue by an example. 

\begin{example}
Consider a hypothetical team of three students whose vector of true contributions is 
\[
t=\left(\frac{1}{2}, \frac{1}{2},0\right).
\]
The matrix of peer evaluations and the the vector of perceived contributions are
\[
A=
\begin{bmatrix}
0 & 1 & 1/2  \\
1 & 0 & 1/2 \\
0 & 0 & 0
\end{bmatrix},\quad 
s=
\begin{bmatrix}
1/2 \\ 1/2 \\ 0   
\end{bmatrix}=t,
\]
which means that now all the students are fairly rewarded. 
\end{example}

Thus it is profitable for strongest students to just do all the work by themselves without letting their teammates do anything. It's then (and only then) that they will be fairly rewarded for their hard work. This behavior is very common in practice and a serious weakness of the pie-to-others mechanism is that it incentivizes such behavior.

\section{Results}

In this section, we outline a new mechanism that is accurate but does not rely on self-evaluation.

\subsection{Auxiliary matrix}

Let $a_{ij}$ be raw evaluations of student $i$ by student $j$ for $i,j\in \{1,2,\dots,n\}$. We are going to construct an \emph{auxiliary matrix} $B$ whose entries $b_{ij}$ show relative contributions of students $i$ and $j$, the ratio of $i$'s contribution to $j$'s contribution according to other team members. If $k$ is any student other than $i$ and $j$, then 
\begin{equation}\label{equation: relative contribution}
\frac{a_{ik}}{a_{ik}+a_{jk}} 
\end{equation}
is the share of $i$'s contribution in the combined $i$'s and $j$'s contribution according to $k$. Therefore,
\[
\frac{1}{n-2}\sum_{k\neq i, j}\frac{a_{ik}}{a_{ik}+a_{jk}}
\]
is the average share of $i$'s contribution in the combined $i$'s and $j$'s contribution according to their teammates. Thus,
\begin{equation}\label{equation not-weighted b_i_j}
b_{ij}=\frac{\sum_{k\neq i, j}\frac{a_{ik}}{a_{ik}+a_{jk}}}
{\sum_{k\neq i, j}\frac{a_{jk}}{a_{ik}+a_{jk}}}
\end{equation}
is the average ratio of $i$'s contribution to $j$'s contribution according to their teammates. 

Note that \eqref{equation: relative contribution} does not change if we multiply all evaluations reported by a student by a constant. It means that it is not necessary to normalize the matrix of peer-evaluations $A$ to compute the auxiliary matrix $B$.

\begin{example}\label{example 5 calculation without weights}
Consider a team of 3 students with true contributions 
$t=(0.2,0.3,0.5)$. The matrix of peer-evaluations and the auxiliary matrix are
\[
A=
\begin{bmatrix}
0 & 2 & 2 \\
3 & 0 & 3 \\
5 & 5 & 0 
\end{bmatrix},\quad 
B=
\begin{bmatrix}
1  & 2/3 & 2/5 \\
3/2 & 1  & 3/5 \\
5/2 & 5/3 & 1
\end{bmatrix}
\]
Note that each column of the matrix $B$ is proportional to $t$.
\end{example}

\subsection{Instructor's judgment}

It is not realistic to expect that actual students would be completely truthful and absolutely precise in their evaluations. In our experience, students who contribute least of all usually tend not to put too much thought into evaluations that they submit and often simply give equal scores to everyone. It is therefore important to have some procedure of discrediting evaluations that are not trustworthy.

In our mechanism, students not only give numeric evaluations to each other, but also provide justifications, i.e., write short reports on each of their teammates. The instructor will then read these reports and give students grades for writing the good trust-worthy reports. Let $w_i$ be the grade given to student $i$ by the instructor for writing reports.

We will now modify the construction of the auxiliary matrix $B$ in order to integrate trustworthiness of students' reports into it. Specifically, \eqref{equation not-weighted b_i_j} is replaced with 
\begin{equation}\label{equation weighted b_i_j}
b_{ij}=\frac{\sum_{k\neq i, j}\frac{w_k \cdot a_{ik}}{a_{ik}+a_{jk}}}
{\sum_{k\neq i, j}\frac{w_k \cdot a_{jk}}{a_{ik}+a_{jk}}},
\end{equation}
i.e., the average is replaced with weighted average.

\begin{example}\label{example 6 calculation with weights} 
Consider four students and true contributions 
\[
t=(0.1,0.2,0.3,0.4).
\]
Suppose also that the vector of grades for writing reports is $w=(4,0,1,3)$. Probably, student 1 wrote nice trustworthy reports, student 2 did not write anything, student 3 wrote something short and not very meaningful, and student 4 wrote meaningful reports but missed some points. At the same time, students 1 and 4 are truthful in their evaluations, student 2 gave same scores to everyone and student 3 was not completely truthful so that the matrix of raw evaluations is
\[
A=
\begin{bmatrix}
0 & 1 & 11 & 1 \\
2 & 0 & 19 & 2 \\
3 & 1 &  0 & 3 \\
4 & 1 & 39 & 0
\end{bmatrix}
\]
Then the auxiliary matrix is 
\[
B=
\begin{bmatrix}
1       &  0.518987 &  0.333333  &  0.282051 \\
1.92683 &  1        &  0.666667  &  0.497418 \\
3       &  1.5      &  1         &  0.75 \\
3.54545 &  2.01038  &  1.33333   &  1
\end{bmatrix}
\]
For instance,
\[
b_{12}=\frac{\frac{1\cdot a_{13}}{a_{13}+a_{23}}+\frac{3\cdot a_{14}}{a_{14}+a_{24}}}{\frac{1\cdot a_{23}}{a_{13}+a_{23}}+\frac{3\cdot a_{24}}{a_{14}+a_{24}}}
=\frac{\frac{11}{11+19}+\frac{3\cdot 1}{1+2}}
{\frac{19}{11+19}+\frac{3\cdot 2}{1+2}}=0.518987
\]
\end{example}

\subsection{Missing values}

In practice, some students fail to submit evaluations of their teammates, i.e., a real matrix $A$ may have missing values. In this case, the instructor will give a zero score for reports to students who did not write any reports and impute their missing numeric evaluations with equal scores. Such non-existing evaluations will be then automatically discarded.

\subsection{Main mechanism}

To calculate "true" individual contributions of each team member from a matrix $A$ of peer evaluations, we let the instructor grade reports written by each student on their teammates. Let $w_k$ be the grade given to student $k$ by the instructor, we  calculate the auxiliary matrix $B$ by \eqref{equation weighted b_i_j}.
Then the entry $b_{ij}$ of the auxiliary matrix is the relative contribution of student $i$ to student $j$ according to their teammates. We also have $b_{ii}=1$ for all $i$ and $b_{ij}=\frac{1}{b_{ji}}$.

Note that if everyone were completely truthful, then each column of the matrix $B$ would be proportional to the true contributions of the team members. In practice, however, columns of the matrix $B$ may not be proportional to each other due to difference in students' opinions. Another practical issue is that there may be students who don't contribute at all. If $j$ is such a student, i.e., $t_j=0$, then we would have
\[
b_{ij}=
\begin{cases}
1, & \mbox{ if } i=j,\\
\infty, &\mbox{ if } t_i>0,\\
\mbox{undefined}, & \mbox{ if } i\neq j \mbox{ and }t_i=0
\end{cases}
\]
The final scores are computed from the matrix $B$ by
\begin{equation}\label{equation our mechanism}
s=
\mathrm{Average}_{j}
\frac{B_{*j}}{\sum_{i=1}^{n} b_{ij}},
\end{equation}
where the average is taken over all columns $j$ that do not have infinite or missing values. Note that we normalize each column $B_{*j}$ dividing it by the sum of its entries, to ensure that sum of entries of the vector $s$ is $1$.

\begin{theorem}
Suppose that at least two students made positive contribution to the team's work and that there are at least three students who evaluated each of their teammates and got positive grades from the instructor for reports that they wrote.
Then the mechanism given by \eqref{equation our mechanism} is accurate. 
\end{theorem}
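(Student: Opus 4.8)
The plan is to reduce everything to the single identity $b_{ij} = t_i/t_j$ under truthful reporting, and then observe that this forces every retained column of $B$ to collapse to $t$ after normalization. First I would make the notion of truthfulness precise. Because the mechanism forbids self-evaluation we have $a_{jj}=0$, so the reported column $A_{*j}$ cannot literally be proportional to $t$ when $t_j>0$; the intended meaning, which the Examples confirm, is proportionality off the diagonal. Thus truthfulness says that for each evaluator $k$ there is a constant $\lambda_k>0$ with $a_{ik}=\lambda_k t_i$ for all $i\neq k$. The crucial structural remark is that formula \eqref{equation weighted b_i_j} for $b_{ij}$ references only the third-party entries $a_{ik},a_{jk}$ with $k\neq i,j$ --- never a self-evaluation and never the mutual entries $a_{ij},a_{ji}$ --- which is exactly why accuracy can survive the absence of self-evaluation.

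Second, I would carry out the core computation. Fix $i\neq j$ and any admissible $k\neq i,j$. Substituting the truthful values gives $a_{ik}+a_{jk}=\lambda_k(t_i+t_j)$, hence
\[
\frac{w_k\,a_{ik}}{a_{ik}+a_{jk}}=w_k\,\frac{t_i}{t_i+t_j},\qquad
\frac{w_k\,a_{jk}}{a_{ik}+a_{jk}}=w_k\,\frac{t_j}{t_i+t_j}.
\]
The decisive point is that the factor $t_i/(t_i+t_j)$ is independent of $k$: the scaling $\lambda_k$ cancels inside each term. Therefore both sums in \eqref{equation weighted b_i_j} share the common factor $\sum_{k\neq i,j}w_k$, which cancels between numerator and denominator, leaving $b_{ij}=t_i/t_j$ whenever $t_j>0$ (including the case $t_i=0$, where the numerator vanishes). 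This agrees with the stated conventions $b_{jj}=1$ and $b_{ij}=1/b_{ji}$.

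Third, I would use the two hypotheses to control well-definedness and then finish. The assumption that at least three students wrote credible, complete reports (so $w_k>0$ with all entries present) guarantees that for every pair $(i,j)$ the index set $\{\,k\neq i,j : w_k>0\,\}$ is nonempty; combined with $t_j>0$ this makes each denominator $\frac{t_j}{t_i+t_j}\sum_{k\neq i,j}w_k$ strictly positive, so $b_{ij}$ is genuinely defined for every column $j$ with $t_j>0$. A column $j$ with $t_j=0$ instead produces infinite entries (when $t_i>0$) or undefined $0/0$ entries, and is discarded by \eqref{equation our mechanism}; the assumption that at least two students made positive contributions guarantees that at least one column survives, so the average is taken over a nonempty set. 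For any surviving column,
\[
\frac{B_{*j}}{\sum_{i=1}^{n}b_{ij}}=\frac{(t_i/t_j)_{i=1}^{n}}{\bigl(\sum_{i=1}^{n}t_i\bigr)/t_j}=(t_i)_{i=1}^{n}=t,
\]
using $\sum_i t_i=1$. Since every retained column yields the same vector $t$, their average in \eqref{equation our mechanism} is $t$, i.e. $s=t$.

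The main obstacle is not the algebra but the bookkeeping around degeneracies: matching the infinite and undefined entries to the discard rule, and verifying that the hypotheses are exactly strong enough to keep every relevant denominator positive while leaving at least one usable column. The conceptually important step --- and the one I would state most carefully --- is the exact cancellation of the instructor weights $w_k$, which works not because the weights are equal but because the collapsed ratio $t_i/(t_i+t_j)$ does not depend on the evaluator $k$; this is precisely what lets an arbitrary weighting scheme preserve accuracy.
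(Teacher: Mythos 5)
Your proposal is correct and follows essentially the same route as the paper: substitute the truthful evaluations into \eqref{equation weighted b_i_j}, observe that the evaluator-dependent scale factor cancels within each term so that both sums reduce to a common multiple of $\sum_{k\neq i,j}w_k$, conclude $b_{ij}=t_i/t_j$, and normalize each surviving column to recover $t$. You are somewhat more explicit than the paper about the final normalization step and the bookkeeping of degenerate columns (the paper stops at $b_{ij}=t_i/t_j$ and writes $a_{ij}=t_i/(1-t_j)$, which is just your $\lambda_k=1/(1-t_k)$), but the argument is the same.
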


\begin{proof}
The assumptions are needed to ensure that the matrix $B$ is well-defined and has at least two columns with finite entries.
The rest is straightforward --- to show that the mechanism is accurate, we need to prove that its output is the objective truth assuming that all evaluations are truthful. If all the students report the objective truth, then we have
\[
a_{ij}=\frac{t_i}{1-t_j}
\] 
From \eqref{equation weighted b_i_j}, we get
\[
b_{ij}=
\frac{\sum_{k\neq i, j}\frac{w_k \cdot \frac{t_i}{1-t_k}}{\frac{t_i}{1-t_k}+\frac{t_j}{1-t_k}}}
{\sum_{k\neq i, j}\frac{w_k \frac{t_j}{1-t_k}}
{\frac{t_i}{1-t_k}+\frac{t_j}{1-t_k}}}
=
\frac{\sum_{k\neq i, j}\frac{w_k t_i}{t_i+t_j}}
{\sum_{k\neq i, j}\frac{w_k t_j}
{t_i+t_j}}
=
\frac{t_i\sum_{k\neq i, j}\frac{w_k }{t_i+t_j}}
{t_j \sum_{k\neq i, j}\frac{w_k}{t_i+t_j}}=\frac{t_i}{t_j},
\]
which completes the proof.
\end{proof}

\subsection{Evaluating truthfulness}

Although our mechanism is accurate and does not rely on self-evaluation, there is still a practical consideration that it does not address. Sometimes, two friends may cheat the system by giving unfairly high evaluations to each other. If they write convincing reports, the course instructor will overlook the deceit.

The following method was proposed in \cite{carvalho2012sharing} to discourage such behavior. We actually require students to report self-evaluations, i.e., we have $a_{ii}>0$. Even though self-evaluations are discarded when the mechanism output is computed according to \eqref{equation our mechanism}, all evaluations reported by a student, including the self-evaluation, are used to measure to which extend evaluations $A_{*j}$ reported by student $j$ deviate from the final scores $s$.

Let us introduce the notation
\[
c_{ij}=\frac{a_{ij}}{\sum_{i=1}^{n}a_{ij}}
\]
for contribution of student $i$ according to student $j$. Then
\[
\left|\frac{c_{ij}-t_j}{t_j}\right|
\]
is the relative error of evaluation of student $i$ by student $j$ and
\begin{equation}\label{equation mean evaluation error}
E_j=\frac{1}{n}\sum_{i=1}^{n}\left|\frac{c_{ij}-t_j}{t_j}\right|
\end{equation}
is the average relative error of student $j$'s evaluations.

\subsection{Summary of our mechanism}

In practice, the main score computed by the mechanism according to \eqref{equation our mechanism} constitute 90\% of the final score, with 5\% given for writing reports and 5\% given for consistency of evaluations submitted by a student with scores calculated by the main mechanism. Thus the final score given to student $i$ is
\begin{equation}\label{equation final score}
0.9s_i+0.05w_i+0.05(1-\min(1, E_i)),
\end{equation}
where $s_i$ is the output of the main mechanism computed according to \eqref{equation our mechanism}, $w_i$ is the score given by the instructor to student $i$ for reports that $i$ wrote, and $E_i$ is the average relative error of $i$\,th evaluations found according to \eqref{equation mean evaluation error}.

Note that the average of scores given by \eqref{equation final score} is usually smaller than $1$ and equals $1$ if and only if all evaluations submitted by students are completely truthful. Another remark is that the weights $0.9$, $0.05$, and $0.05$ for the output of the main mechanism, reports, and consistency of evaluations with output of the main mechanism are not justified by data or theory and finding "right" weights may be a subject of future research.

\section{Discussion}

Our mechanism is accurate. However, even though it does not rely on self-evaluation, it may be possible to manipulate it. 

\begin{example}
Suppose that we have a team of three students and the vector of true contributions is
\[
t=\left(\frac{1}{2}, \frac{1}{4},\frac{1}{4}\right).
\] 
Suppose that the third student decided to manipulate the mechanism and reported incorrect evaluations by claiming that the first and the second student contributed equally. Assume also that the course instructor did not see it and graded all reports as being equally trust-worthy. The matrix of peer-evaluations and the auxiliary matrix are
\[
A=
\begin{bmatrix}
0 & 2/3 & 1/2 \\
1/2 & 0 & 1/2 \\
1/2 & 1/3 & 0
\end{bmatrix},\quad 
B=
\begin{bmatrix}
1 & 1 & 2 \\
1 & 1 & 1 \\
1/2 & 1 & 1
\end{bmatrix}
\]
The vector of percieved contributions is then
\[
s=\frac{1}{3}
\begin{bmatrix}
2/5 \\
2/5 \\
1/5
\end{bmatrix} + 
\frac{1}{3}
\begin{bmatrix}
1/3 \\
1/3 \\
1/3
\end{bmatrix} +
\frac{1}{3}
\begin{bmatrix}
1/2 \\
1/4 \\
1/4
\end{bmatrix}
\]
Then the perceived contribution of the third student is $\frac{47}{180} > \frac{1}{4}$. It means that by changing his evaluations of other students, he changed his own score. 
\end{example}

Note that although manipulating our mechanism is not as straightforward as manipulating the pie-to-all mechanism, it is still possible. 

A mechanism is called \emph{incentive compatible} if lying does not improve one's own score given that others tell the truth. In other words, a mechanism is incentive compatible if the collective truth-telling is a Nash equilibrium. A mechanism is \emph{not manipulable} if changing evaluations of other students one reports does not change one's own score. Note that a mechanism that is not manipulable is automatically incentive compatible, but not vice versa.

Our mechanism is manipulable, but we believe that it will become incentive compatible if a small portion of the final score is given for consistency of scores reported by a student with the team's perceived contributions. However, determining that small portion (in practice, we use $10\%$, but we don't have justification for that number) is still an open problem. We don't know if there exists an accurate mechanism that is not manipulable. We believe that it does not, but proving it is another open problem.

\bibliographystyle{plain}
\bibliography{references}

\end{document}